\declaretheorem[name=Theorem, refname={Theorem,Theorems}, Refname={Theorem, Theorems}]{theorem}
\declaretheorem[name=Lemma, refname={Lemma, Lemmas}, Refname={Lemma, Lemmas}, sibling=theorem]{lemma}
\declaretheorem[name=Conjecture, refname={Conjecture,Conjectures}, Refname={Conjecture, Conjectures}, sibling=theorem]{conjecture}
\setlist[enumerate]{label=\textup{(\arabic*)}, noitemsep, topsep=3pt plus 3pt, leftmargin=*}
\newcommand{\calC}{\mathcal{C}}
\newcommand{\calG}{\mathcal{G}}
\newcommand{\calH}{\mathcal{H}}
\newcommand{\setN}{\mathbb{N}}
\newcommand{\setZ}{\mathbb{Z}}
\DeclarePairedDelimiter\set{\{}{\}}
\DeclarePairedDelimiter\size{\lvert}{\rvert}
\DeclarePairedDelimiter\floor{\lfloor}{\rfloor}
\let\leq\leqslant
\let\geq\geqslant
\title{Separating polynomial $\chi$-boundedness from $\chi$-boundedness}
\author[1]{Marcin Briański \thanks{marcin.brianski@doctoral.uj.edu.pl; partially supported by the Polish National Science Centre grant (BEETHOVEN; UMO-2018/31/G/ST1/03718)}}
\author[2]{James Davies \thanks{jgdavies@uwaterloo.ca}}
\author[1]{Bartosz Walczak \thanks{bartosz.walczak@uj.edu.pl; partially supported by the Polish National Science Centre grant 2019/34/E/ST6/00443}}
\affil[1]{Department of Theoretical Computer Science, Faculty of Mathematics and Computer Science, Jagiellonian~University, Kraków, Poland}
\affil[2]{Department of Combinatorics and Optimization, University of Waterloo, Waterloo, Canada}
\date{}
\begin{document}

\maketitle

\begin{abstract}
Extending the idea from the recent paper by Carbonero, Hompe, Moore, and Spirkl, for every function $f\colon\setN\to\setN\cup\set{\infty}$ with $f(1)=1$ and $f(n)\geq\binom{3n+1}{3}$, we construct a hereditary class of graphs $\calG$ such that the maximum chromatic number of a graph in $\calG$ with clique number $n$ is equal to $f(n)$ for every $n\in\setN$.
In particular, we prove that there exist hereditary classes of graphs that are $\chi$-bounded but not polynomially $\chi$-bounded.
\end{abstract}

\section{Introduction}

Given a class of graphs $\calC$ its \emph{$\chi$-bounding function} is the function $\chi_\calC\colon\setN\to\setN\cup\set{\infty}$ defined as
\[
    \chi_\calC(n)=\sup\set{\chi(G)\colon G\in\calC \text{ and } \omega(G)=n} \text{,}
\]
where $\chi(G)$ and $\omega(G)$ denote, respectively, the chromatic number and the clique number of $G$.
A class of graphs $\calC$ is \emph{$\chi$-bounded} if there is a function $f\colon\setN\to\setN$ such that $\chi(G)\leq f(\omega(G))$ for every graph $G\in\calC$, or equivalently if $\chi_\calC(n)$ is finite for every $n\in\setN$.
A class $\calC$ is \emph{polynomially\/ $\chi$-bounded} if such a function $f$ can be chosen to be a polynomial.
A class $\calC$ is \emph{hereditary} if it is closed under taking induced subgraphs.

A well-known and fundamental open problem, due to Esperet~\cite{esperet2017graph}, has been to decide whether every hereditary $\chi$-bounded class of graphs is polynomially $\chi$-bounded.
We provide a negative answer to this question.
More generally, we prove that $\chi$-bounding functions may be arbitrary, so long as they are bounded from below by a certain cubic function.

\begin{theorem}\label{thm:main}
Let\/ $f\colon\setN\to\setN\cup\set{\infty}$ be such that\/ $f(1)=1$ and\/ $f(n)\geq\binom{3n+1}{3}$ for every\/ $n\geq 2$.
Then there exists a hereditary class of graphs\/ $\calG$ such that\/ $\chi_{\calG}(n)=f(n)$ for every\/ $n\in\setN$.
\end{theorem}

On the other hand, $\chi$-bounding functions are not entirely arbitrary.
For instance, Scott and Seymour~\cite{SS2016} proved that every hereditary class of graphs $\calC$ with $\chi_\calC(2)=2$ satisfies $\chi_\calC(n)\leq 2^{2^{n+1}}$.

The proof of \cref{thm:main} is heavily based on the idea used by Carbonero, Hompe, Moore, and Spirkl~\cite{CHMS} in their very recent solution to another well-known problem attributed to Esperet~\cite{SS2020}.
They proved that for every $k\in\setN$, there is a $K_4$-free graph $G$ with $\chi(G)\geq k$ such that every triangle-free induced subgraph of $G$ has chromatic number at most $4$.
Their proof, in turn, relies on an idea by Kierstead and Trotter~\cite{KT1992}, who proved in 1992 that the class of oriented graphs excluding an directed path on four vertices as an induced subgraph is not $\chi$-bounded.
We derive \cref{thm:main} from the following result, which also generalises the recent breakthrough of Carbonero, Hompe, Moore, and Spirkl~\cite{CHMS}.

\begin{theorem}\label{col:main}
For every pair of integers\/ $n$ and\/ $k$ with\/ $k\geq n\geq 2$, there exists a graph\/ $G$ with clique number\/ $n$ and chromatic number\/ $k$ such that every induced subgraph of\/ $G$ with clique number\/ $m<n$ has chromatic number at most\/ $\binom{3m+1}{3}$.
\end{theorem}

In the case that $n$ is a prime number, we obtain a better bound.

\begin{theorem}\label{col:mainprime}
For every pair of integers\/ $p$ and\/ $k$ with $p$ a prime and\/ $k\geq p$, there exists a graph\/ $G$ with clique number\/ $p$ and chromatic number\/ $k$ such that every induced subgraph of\/ $G$ with clique number\/ $m<p$ has chromatic number at most\/ $\binom{m+2}{3}$.
\end{theorem}

In the first version of this paper~\cite{BDWv1} we only proved a weaker version of \cref{col:mainprime} with $\binom{m+2}{3}$ replaced by $m^{m^2}$.
Despite the worse bound obtained, this alternative proof may still be of independent interest.

Very recently, Girão, Illingworth, Powierski, Savery, Scott, Tamitegama, and Tan~\cite{GIPSSTT} independently proved another result generalising the fact that for every prime $p$, there are graphs with clique number $p$ and arbitrarily large chromatic number whose induced subgraphs with clique number at most $p-1$ have bounded chromatic number~\cite{BDWv1}.
They proved that for every graph $F$ with at least one edge, there are graphs of arbitrarily large chromatic number and the same clique number as $F$ in which every $F$-free induced subgraph has chromatic number at most some constant $c_F$ depending only on $F$.
They also showed the analogous statement where clique number is replaced by odd girth.
See~\cite{SS2020} and~\cite{SR2019} for recent surveys on $\chi$-boundedness and polynomial $\chi$-boundedness.

\section{Proof}

First, we show that \cref{col:main} implies \cref{thm:main}.

\begin{proof}[Proof of \cref{thm:main} assuming \cref{col:main}]
Fix a function $f\colon\setN\to\setN\cup\set{\infty}$ such that $f(1)=1$ and $f(n)\geq\binom{3n+1}{3}$ for every $n\geq 2$.
By \cref{col:main}, for every pair of integers $n$ and $k$ with $k\geq n\geq 2$, there exists a graph $H_{n,k}$ with clique number $n$ and chromatic number $k$ such that every induced subgraph of $H_{n,k}$ with clique number $m<n$ is $\binom{3m+1}{3}$-colourable.

We now consider two cases.
If $f(n)$ is finite, we put $\calH_n=\set{H_{n,f(n)}}$.
Otherwise $f(n)=\infty$, and we put $\calH_n=\set{H_{n,k}\colon k\geq n}$.
Finally, we let $\calH=\bigcup_{n=2}^\infty\calH_n$ and $\calG$ be the hereditary closure of $\calH$.

We now argue that $\chi_\calG(n)=f(n)$ for all $n\in\setN$.
The claim holds trivially for $n=1$, so assume $n\geq 2$.
If $f(n)=\infty$, then the sequence of graphs $\set{H_{n,k}\colon k\geq n}\subseteq\calG$ all have clique number equal to $n$ and have unbounded chromatic number, thus showing that $\chi_\calG(n)=\infty$, as claimed.
Otherwise, $f(n)$ is finite.
The graph $H_{n,f(n)}\in\calG$ shows that $\chi_\calG(n)\geq f(n)$.
For the reverse inequality, let $G\in\calG$ be such that $\omega(G)=n$.
Then there exist integers $k$ and $n^*$ with $k\geq n^*\geq n$ such that $G$ is an induced subgraph of $H_{n^*,k}\in\calH$.
The unique graph of $\calH$ with clique number $n$ is $H_{n,f(n)}$.
So if $n^*=n$, then $\chi(G)\leq\chi(H_{n,f(n)})=f(n)$, and if $n^*>n$, then $\chi(G)\leq\binom{3n+1}{3}$.
Combining these inequalities, we conclude that
\[
    f(n) \leq \chi_\calG(n) \leq \max\set*{\tbinom{3n+1}{3},\:f(n)} = f(n) \text{,}
\]
and the theorem follows.
\end{proof}

The rest of the paper is devoted to proving \cref{col:main}.
We begin with the following lemma.

\begin{lemma}\label{lem:basegraph}
For every positive integer\/ $k$, there is a graph\/ $G_k$ and an acyclic orientation of its edges with the following properties:
\begin{enumerate}
\item \label{lem:colours} $\chi(G_k)=k$;
\item \label{lem:unique} for every pair of vertices\/ $u$ and\/ $v$, there is at most one directed path from\/ $u$ to\/ $v$ in\/ $G_k$;
\item \label{lem:path} there is a directed path in\/ $G_k$ on\/ $k$ vertices;
\item \label{lem:colouring} there is a\/ $k$-colouring\/ $\phi$ of\/ $G_k$ such that\/ $\phi(u)\neq\phi(v)$ for any two distinct vertices\/ $u$ and\/ $v$ such that there is a directed path from\/ $u$ to\/ $v$ in\/ $G_k$.
\end{enumerate}
\end{lemma}

Various well-known constructions of triangle-free graphs with arbitrarily high chromatic number, such as Zykov's~\cite{Zykov1949} and Tutte's~\cite{Descartes1947,Descartes1954}, satisfy the condition of \cref{lem:basegraph} once the edges are oriented in a way that follows naturally from the construction.
See~\cite{CHMS} and~\cite{KT1992} for an explicit construction of the graphs $G_k$ with the appropriate acyclic orientations, based on Zykov's construction.
It is only implicit that the acyclic orientations of the graphs in~\cite{CHMS} and~\cite{KT1992} satisfy all of the properties in the conclusion of \cref{lem:basegraph}, so for the sake completeness we provide a proof based on Tutte's construction.

\begin{proof}[Proof of \cref{lem:basegraph}]
We proceed by induction on $k$.
The base case $k=1$ follows by taking a single-vertex graph as $G_1$.
For the induction step, assume $G_{k-1}$ is an acyclically oriented graph satisfying conditions \ref{lem:colours}--\ref{lem:colouring} for $k-1$.
To construct $G_k$, begin with a stable set $S$ with $\size{S}=(k-1)(\size{V(G_{k-1})}-1)+1$, and for every subset $X$ of $S$ with $\size{X}=\size{V(G_{k-1})}$, add an isomorphic copy $G_X$ of $G_{k-1}$ (with the same orientation as in $G_{k-1}$) and an arbitrary perfect matching between the vertices in $X$ and the vertices of $G_X$, oriented from $X$ to $G_X$.
This clearly preserves acyclicity of the orientation.
Since every vertex in $S$ has at most one edge to each copy $G_X$ of $G_{k-1}$, condition \ref{lem:unique} is preserved.
Any directed path on $k-1$ vertices in $G_X$ extends to a directed path on $k$ vertices in $G_k$ by adding a vertex from $S$, so \ref{lem:path} holds.
Any colouring of the copies $G_X$ of $G_{k-1}$ with a common palette of $k-1$ colours extends to a $k$-colouring of $G_k$ by using a single new colour on $S$, which shows that $\chi(G_k)\leq\chi(G_{k-1})+1$ and condition \ref{lem:colouring} is preserved.
Finally, suppose there exists a $(k-1)$-colouring of $G_k$.
Then, since $\size{S}>(k-1)(\size{V(G_{k-1})}-1)$, there is a monochromatic set $X\subseteq S$ with $\size{X}=\size{V(G_{k-1})}$.
The fact that $X$ and $G_X$ are connected by a perfect matching implies that at most $k-2$ colours are used on $G_X$, which contradicts the fact that $\chi(G_X)=\chi(G_{k-1})=k-1$.
Hence $\chi(G_k)=k$, as claimed in \ref{lem:colours}.
\end{proof}

For the rest of the argument, we fix an arbitrary sequence $(G_k)_{k\in\setN}$ of graphs given by \cref{lem:basegraph}.
Now, for every pair of positive integers $k$ and $p$, where $p$ is a prime number, we construct a graph $G_{k,p}$ by adding edges to $G_k$ as follows.

Let $\leq$ be the directed reachability order of the vertices of $G_k$, that is, $u\leq v$ if and only if there is a (unique) directed path from $u$ to $v$ in $G_k$.
Since the orientation of $G_k$ given by \cref{lem:basegraph} is acyclic, $\leq$ is indeed a partial order.
For every pair of vertices $u$ and $v$ in $G_k$ such that $u\leq v$, let $d(u,v)$ be the length of the unique directed path from $u$ to $v$ in $G_k$ (i.e., the number of edges in that path).
The graph $G_{k,p}$ has the same vertex set as $G_k$ and has the set $\{uv\colon u<v$ and $d(u,v)\not\equiv 0\pmod{p}\}$ as the edge set.
We consider each such edge $uv$ as oriented from $u$ to $v$.
Since the original (oriented) edges $uv$ of $G_k$ satisfy $u<v$ and $d(u,v)=1$, the graph $G_{k,p}$ contains $G_k$ as a subgraph.
Furthermore, every edge of $G_{k,p}$ connects vertices with different colours in a $k$-colouring $\phi$ of $G_k$ claimed in \cref{lem:basegraph}.
Therefore, $\chi(G_{k,p})=k$.
Furthermore $G_{k,p}$ is acyclic since $G_k$ is acyclic.

Next, we examine cliques in $G_{k,p}$ (and its induced subgraphs).
Since $G_{k,p}$ is acyclic, every clique of $G_{k,p}$ induces a transitive tournament.
Given a clique $C$ of an acyclic oriented graph, we let $t(C)$ be the first vertex of the transitive tournament induced by $C$.
We call $t(C)$ the \emph{tail} of $C$.
Given a clique $C$ of $G_{k,p}$, we let $r(C)$ be the subset of $\setZ_p$ such that $r(C)\equiv\set{d(t(C),v)\colon v\in C}\pmod{p}$.
We call $r(C)$ the \emph{residue} of the clique $C$.
Note that $0$ is always contained in $r(C)$ since $t(C)\in C$.
Furthermore $\size{C}=\size{r(C)}$, otherwise there would exist two distinct vertices $u,v\in C$ such that $d(t(C),u)\equiv d(t(C),v)\pmod{p}$, and so $d(u,v)\equiv 0\pmod{p}$, which would contradict the fact that $u$ and $v$ are adjacent.
This observation allows us to determine the clique number of $G_{k,p}$.

\begin{lemma}\label{lem:cliquebound}
For every positive integer\/ $k$ and every prime\/ $p\leq k$, the graph\/ $G_{k,p}$ has clique number\/ $p$.
\end{lemma}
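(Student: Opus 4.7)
The plan is to show that any clique in $G'_p$ must form a chain under the reachability order $\leq$, to translate the edge condition into an arithmetic condition on path lengths, and then to finish by a pigeonhole argument modulo $p$.

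First I would note that every edge of $G'_p$ joins two vertices that are comparable under $\leq$, since $uv$ is an edge only when $u<v$. Consequently, the vertex set of any clique $K$ in $G'_p$ is totally ordered by $\leq$, so we may write $K=\{v_1,\dots,v_k\}$ with $v_1<v_2<\cdots<v_k$.

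The key step is the following additivity claim: for all $i<j<l$, we have $d(v_i,v_l)=d(v_i,v_j)+d(v_j,v_l)$. This is where the uniqueness clause of \Cref{lem:basegraph} is essential: concatenating the unique directed path from $v_i$ to $v_j$ with the unique directed path from $v_j$ to $v_l$ yields a directed path from $v_i$ to $v_l$, which must coincide with the unique directed path given by $v_i<v_l$. Setting $a_i=d(v_1,v_i)$ (with $a_1=0$), additivity gives $d(v_i,v_j)=a_j-a_i$ for every pair with $i<j$.

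Finally, the clique condition says $d(v_i,v_j)\not\equiv 0\pmod{p}$ for all $i<j$, which translates into $a_i\not\equiv a_j\pmod{p}$ for all $i\ne j$. Since there are only $p$ residue classes modulo $p$, this forces $k\leq p$, as required. The only real obstacle is recognizing that uniqueness of directed paths yields additivity along a chain; once this is in place the bound is immediate from pigeonhole.
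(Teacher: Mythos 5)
Your proposal is correct and follows essentially the same argument as the paper: order the clique into a chain under $\leq$, use uniqueness of directed paths to get additivity of $d$ along the chain, and apply pigeonhole modulo $p$ to the distances from the smallest vertex. The paper merely phrases it contrapositively (assuming a clique of size greater than $p$ and deriving an edge $v_iv_j$ with $d(v_i,v_j)\equiv 0\pmod p$), while you argue directly; the content is identical.
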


\begin{proof}
Since $G_k$ contains a directed path on $k$ vertices and $p\leq k$, the graph $G_{k,p}$ contains a clique of size $p$.
Conversely, if $C$ is a clique in $G_{k,p}$, then $\size{C}=\size{r(C)}\leq\size{\setZ_p}=p$.
\end{proof}

A \emph{rotation} of a subset $X$ of $\setZ_p$ is a subset of $\setZ_p$ of the form $X+a=\set{x+a\colon x\in X}$ for any $a\in\setZ_p$.
A subset of $\setZ_p$ is \emph{rooted} if it contains $0$.
The rotation $X+a$ of a rooted subset $X$ of $\setZ_p$ is rooted if and only if $-a\in X$.
Let $\sim_p$ be the equivalence relation on the rooted subsets of $\setZ_p$ such that $X\sim_pY$ whenever $Y$ is a rotation of $X$.
Let $[X]_p$ denote the equivalence class of $X$ in $\sim_p$.
For every proper rooted subset $X$ of $\setZ_p$ (such that $X\neq\setZ_p$), since $p$ is a prime, all rotations $X+a$ of $X$ with $a\in\setZ_p$ are distinct, and in particular $\size{[X]_p}=\size{X}$.
Order every equivalence class arbitrarily, and for every proper rooted subset $X$ of $\setZ_p$, let $c(X)\in\set{1,\ldots,\size{X}}$ denote the position of $X$ in this ordering.

\begin{lemma}\label{lem:chibound}
For every positive integer\/ $k$, every prime\/ $p$, and every induced subgraph\/ $G$ of\/ $G_{k,p}$ with clique number\/ $m<p$, we have\/ $\chi(G)\leq\binom{m+2}{3}$.
\end{lemma}

\begin{proof}
We will colour the vertices of $G$ by triples of integers $(a,b,c)$ with $m\geq a\geq b\geq c\geq 1$.
Since there are $\binom{m+2}{3}$ choices for such a triple, this will be a $\binom{m+2}{3}$-colouring of $G$.

For each vertex $v$ of $G$, let $a(v)$ be the maximum size of a clique in $G$ with tail $v$.
Thus $m\geq a(v)\geq 1$.
Let $B(v)$ be the intersection of the residues of all cliques of size $a(v)$ with tail $v$ in $G$.
Since $0$ belongs to the residue of every clique, we have $0\in B(v)$.
Let $b(v)=\size{B(v)}$, so that $a(v)\geq b(v)\geq 1$.
Let $c(v)=c(B(v))$, so that $b(v)\geq c(v)\geq 1$, as $\size{[B(v)]_p}=\size{B(v)}=b(v)$.
Finally, let $\psi(v)=(a(v),b(v),c(v))$.
We have $m\geq a(v)\geq b(v)\geq c(v)\geq 1$ for every $v$, so it remains to show that $\psi$ is a proper colouring of $G$.

Suppose for the sake of contradiction that some two vertices $u$ and $v$ of $G$ with $\psi(u)=\psi(v)$ are connected by an edge of $G$ oriented from $u$ to $v$.
Let $d\in\setZ_p$ be such that $d(u,v)\equiv d\pmod{p}$.
Since $u$ and $v$ are adjacent in $G$, we have $d\neq 0$.
Observe that if $X$ is the residue of a clique with tail $v$, then $(X+d)\cup\set{0}$ is the residue of a clique with tail $u$.
Therefore, since $a(u)=a(v)$, the residue of every clique of size $a(v)$ with tail $v$ must contain $-d$.
Thus $-d\in B(v)$ and if $X$ is the residue of a clique of size $a(v)$ with tail $v$, then $X+d$ is the residue of a clique of the same size with tail $u$.
Hence $B(u)\subseteq B(v)+d$, and since $b(u)=b(v)$, we further conclude that $B(u)=B(v)+d$.
Since $0$ belongs to the residue of every clique, both $B(u)$ and $B(v)$ are rooted and $B(u)\sim_pB(v)$.
Thus $B(u)=B(v)$, as $c(u)=c(v)$.
However, since $b(u)=b(v)\leq m<p$ and $d\neq 0$, we have $B(u)=B(v)+d\neq B(v)$, which is a constradiction.
This shows that $\psi$ is a proper colouring of $G$, as desired.
\end{proof}

By combining \cref{lem:cliquebound} and \cref{lem:chibound}, we have so far proven \cref{col:mainprime}.
Next we extend the construction to non-primes in order to prove \cref{col:main}.

For every triple of positive integers $k,n,p$ with $p$ prime and $p>2n$, we construct a graph $G_{k,n,p}$ by removing the edges $uv$ of $G_{k,p}$ with $d(u,v) \not\equiv\pm 1,\ldots,\pm(n-1)\pmod p$.
We will now determine the clique number and the chromatic number of $G_{k,n,p}$.

\begin{lemma}\label{lem:cliquenonprime}
Let\/ $k$, $n$, and\/ $p$ be positive integers with\/ $p$ prime, $p>2n$, and\/ $k\geq n$.
Then\/ $G_{k,n,p}$ has clique number\/ $n$ and chromatic number\/ $k$.
\end{lemma}

\begin{proof}
We have $\chi(G_k)=\chi(G_{k,p})=k$.
Since $G_k$ is a subgraph of $G_{k,n,p}$ and $G_{k,n,p}$ is a subgraph of $G_{k,p}$, it follows that $\chi(G_{k,n,p})=k$.
Next, we determine the clique number of $G_{k,n,p}$.

Let $I=\set{n,n+1,\ldots,p-n}\subset\setZ_p$.
Note that $uv$ is an edge of $G_{k,n,p}$ if and only if $u<v$ and $d(u,v)\notin\set{0}\cup I\pmod{p}$.
Since $G_k$ contains a directed path on $k$ vertices and $n\leq k$, the graph $G_{k,n,p}$ has clique number at least $n$.
It remains to show that $G_{k,n,p}$ has clique number at most $n$.

Let $C$ be a clique in $G_{k,n,p}$, and let $v=t(C)$.
Consider a vertex $x\in C$.
Since $-I=I$ (with arithmetic modulo $p$), we can observe that $r(C)$ must be disjoint from $I+d(v,x)$.
Indeed, if there is a vertex $y\in C$ with $d(v,y)\in I+d(v,x)$, then either $y<x$ and $d(y,x)=d(v,x)-d(v,y)=-(d(v,y)-d(v,x))\in-I=I\pmod{p}$, or $x<y$ and $d(x,y)=d(v,y)-d(v,x)\in I\pmod{p}$, and in either case $xy$ is not an edge of $G_{k,m,p}$, contradicting the fact that $C$ is a clique.
Thus the set $r(C)$ is disjoint from $\bigcup_{x\in C}(I+d(v,x))$, which implies $\size{r(C)}+\size{\bigcup_{x\in C}(I+d(v,x))}\leq p$.
For every $y\in r(C)\cap\set{1,\ldots,n-1}$, we have $p-n+y\in\bigcup_{x\in C}(I+d(v,x))$, and similarly, for every $y\in r(C)\cap\set{-1,\ldots,-n+1}$, we have $n+y\in\bigcup_{x\in C}(I+d(v,x))$.
Therefore, since $d(v,x)\not\equiv d(v,y)\pmod{p}$ for any distinct $x,y\in C$, we have
\[
    \textstyle\size*{\bigcup_{x\in C}(I+d(v,x))} \geq \size{I} + \size{r(C)\setminus\set{0}} = \size{r(C)}+p-2n \text{.}
\]

Finally using the fact that $\size{r(C)}=\size{C}$, we conclude that $\size{C}\leq n$, as desired.
\end{proof}

Next we examine the maximum size of a clique in an induced subgraph of $G_{k,n,p}$ that is induced by the vertices of a clique in $G_{k,p}$.
This will allow us to compare the chromatic number of induced subgraphs of $G_{k,p}$ and $G_{k,n,p}$ that have the same vertex set.

\begin{lemma}\label{lem:inducedcliquenonprime}
Let\/ $k$, $n$, and\/ $p$ be positive integers with\/ $p$ prime, $p>2n$, and\/ $k\geq n$.
Then for every clique\/ $C$ of\/ $G_{k,p}$, the induced subgraph\/ $G_{k,n,p}[C]$ of\/ $G_{k,n,p}$ contains a clique of size at least\/ $\frac{n}{p}\size{C}$.
\end{lemma}

\begin{proof}
Let $C$ be a clique in $G_{k,p}$.
For each $i\in\setZ_p$, let $J_i=\set{i,i+1,\ldots,i+n-1}\subset\setZ_p$.
Since each $i\in\setZ_p$ is contained in exactly $n$ of the $p$ sets $J_0,\ldots,J_{p-1}$, by the pigeon-hole principle, there exists $i\in\setZ_p$ such that $\size{r(C)\cap J_i}\geq\frac{n}{p}\size{r(C)}$.
Let $C_i=\set{v\in C\colon d(t(C),v)\in J_i\pmod{p}}$.
It follows that $\size{C_i}=\size{r(C)\cap J_i}\geq\frac{n}{p}\size{r(C)}=\frac{n}{p}\size{C}$.
It remains to show that $C_i$ is a clique in $G_{k,n,p}$.

Let $x$ and $y$ be distinct vertices in $C_i$.
Since $x,y\in C$, they are adjacent in $G_{k,p}$, so $d(t(C),x)\not\equiv d(t(C),y)\pmod{p}$, and we can assume without loss of generality that $x<y$.
It follows that $d(x,y)=d(t(C),y)-d(t(C),x)\in\set{\pm 1,\ldots,\pm(n-1)}\pmod{p}$, as $d(t(C),x),\:d(t(C),y)\in J_i\pmod{p}$.
Hence $x$ and $y$ are adjacent in $G_{k,n,p}$.
We conclude that $C_i$ is indeed a clique in $G_{k,n,p}$.
\end{proof}

\begin{lemma}\label{lem:chiboundonprime}
Let\/ $k$, $n$, and\/ $p$ be positive integers with\/ $p$ prime, $p>2n$, and\/ $k\geq n$, and let\/ $G$ be an induced subgraph of\/ $G_{k,n,p}$ with\/ $m=\omega(G)<n$.
Then\/ $\chi(G)\leq\binom{\floor{mp/n}+2}{3}$.
\end{lemma}

\begin{proof}
Let $G'=G_{k,p}[V(G)]$.
\cref{lem:inducedcliquenonprime} yields $\omega(G')\leq\floor{mp/n}$.
The fact that $G$ is a subgraph of $G'$ and \cref{lem:chibound} yield $\chi(G)\leq\chi(G')\leq\binom{\floor{mp/n}+2}{3}$.
\end{proof}

\cref{col:main} now follows from \cref{lem:cliquenonprime}, \cref{lem:chiboundonprime}, and the following theorem of Schur~\cite{Schur1929} on the gaps between prime numbers.

\begin{theorem}\label{thm:primes}
For every integer\/ $n\geq 2$, there is a prime\/ $p$ such that\/ $2n<p<3n$.
\end{theorem}

\section{Concluding remarks}

To better understand $\chi$-bounding functions, it is of course of interest to improve the bound of $\binom{3m+1}{3}$ in \cref{col:main} (and equivalently this same lower bound function for $f$ in \cref{thm:main}).

A slight tweak to the last step of the proof improves this bound slightly to $\binom{2m}{3}+o(m^3)$.
To do this, instead of using \cref{thm:primes}, we can use the fact that for any $\epsilon>0$, there exists a $n_\epsilon$ such that for every $n\geq n_\epsilon$, there is always a prime $p$ with $2n<p<(2+\epsilon)n$.
This follows from the prime number theorem that the number of primes at most $n$ is asymptotically equal to $n/\ln n$.
For a more recent and explicit result on the gaps between primes, see~\cite{Dusart2018}.

One may hope that another way to further improve this bound would be to improve the bound of $\binom{m+2}{3}$ in \cref{lem:chibound}.
However, in our construction, \cref{lem:chibound} is in some sense best possible.
For every prime $p$, we have been able to construct a graph $G_k'$ (with $k$ large enough) that satisfies the conclusion of \cref{lem:basegraph}, and such that for every positive integer $m<p$, the graph $G_{k,p}'$ (as constructed from $G_k'$) contains an induced subgraph with clique number $m$ and chromatic number $\binom{m+2}{3}$.
So any improvements would require an entirely new construction.

In the other direction, the only result restricting $\chi$-bounding functions is that of Scott and Seymour~\cite{SS2016} stating that if a hereditary class of graphs $\calC$ satisfies $\chi_\calC(2)\leq 2$, then $\calC$ is $\chi$-bounded.
We conjecture the following generalisation.

\begin{conjecture}
For every integer\/ $k\geq 2$, if\/ $\calC$ is a hereditary class of graphs such that\/ $\chi_\calC(n)\leq k$ for every positive integer\/ $n\leq k$, then the class\/ $\calC$ is\/ $\chi$-bounded.
\end{conjecture}

\end{document}